\newtheorem{theorem}{Theorem}
\newtheorem{remark}{Remark}
\newcommand{\eps}{\varepsilon}
\begin{document}
\title{\bf On well-posedness of variational models of charged drops}
\author{Cyrill B. Muratov\thanks{Department of Mathematical Sciences
    and Center for Applied Mathematics and Statistics, New Jersey
    Institute of Technology, Newark, NJ 07102} \and Matteo
  Novaga\thanks{Dipartimento di Matematica, Universit\`a di Pisa,
    Largo B. Pontecorvo 5, Pisa 56127, Italy}}

\maketitle

\begin{abstract}
  Electrified liquids are well known to be prone to a variety of
  interfacial instabilities that result in the onset of apparent
  interfacial singularities and liquid fragmentation. In the case of
  electrically conducting liquids, one of the basic models describing
  the equilibrium interfacial configurations and the onset of
  instability assumes the liquid to be equipotential and interprets
  those configurations as local minimizers of the energy consisting of
  the sum of the surface energy and the electrostatic energy. Here we
  show that, surprisingly, this classical geometric variational model
  is mathematically ill-posed irrespectively of the degree to which
  the liquid is electrified. Specifically, we demonstrate that an
  isolated spherical droplet is never a local minimizer, no matter how
  small is the total charge on the droplet, since the energy can
  always be lowered by a smooth, arbitrarily small distortion of the
  droplet's surface. This is in sharp contrast with the experimental
  observations that a critical amount of charge is needed in order to
  destabilize a spherical droplet. We discuss several possible
  regularization mechanisms for the considered free boundary problem
  and argue that well-posedness can be restored by the inclusion of
  the entropic effects resulting in finite screening of free charges.
\end{abstract}

\noindent {\bf Keywords:} Coulomb fission, geometric variational
problem, free boundary, % regularity,
ill-posedness, screening

% \newpage

% \begin{center}
%   {\bf Significance Statement}
% \end{center}

% Electrospray is one of many applications of interfacial instabilities
% exhibited by electrified liquids. Despite importance to applications
% and numerous modeling and computational studies, a solid theoretical
% understanding of the behavior of electrically conducting liquids
% surrounded by insulating fluids remains elusive. Here we show that
% there is a fundamental mathematical difficulty with the basic model that
% treats the liquid as a perfect conductor. Specifically, surface
% tension alone is not able to stabilize a smooth interface between the
% liquid and the insulator, no matter how small is the charge density,
% making the associated geometric variation problem for the equilibrium
% interface shape ill-posed. We then show that well-posedness can be
% restored by an addition of entropic effects, leading to a finite
% screening length.

% \newpage

% \begin{article}

\section{Introduction}
\label{sec:introduction}

Electrospray is a technique commonly used to ionize large molecules in
aqueous solution for the purposes of mass spectrometry
\cite{gaskell97}. Since the late 1980's, it has had a profound effect
on the analytical chemistry of large biological molecules and won the
2002 Nobel Prize in Chemistry to J. B. Fenn \cite{fenn-nobel}. The
electrospray technique relies on the formation of a thin steady jet of
an electrically conducting liquid upon application of a sufficiently
high voltage to a capillary tip, as first observed by Zeleny
\cite{zeleny17}. As the voltage at the tip is increased, the liquid
meniscus progressively distorts towards a conical shape, until at a
critical voltage a ``Taylor cone'' is formed, emitting a thin liquid
jet that quickly breaks into a fine mist of charged liquid droplets
\cite{taylor64,kebarle00,fernandezdelamora07}. As the resulting
droplets move through the ambient gas, the solvent slowly evaporates,
pushing the droplets towards the so-called Rayleigh limit,
corresponding to an instability of a conducting spherical drop with
respect to arbitrarily small elongations \cite{rayleigh1882}. Upon
losing stability, the droplets undergo ``Coulomb fission'' by forming
transient jets emitting tiny droplets that carry a small fraction of
the parent droplet's mass, but a substantial fraction of its total
charge \cite{fernandezdelamora96,gomez94,duft03,achtzehn05}.  This
process repeats until the solvent in the daughter droplets is
completely evaporated, and the remaining ions enter into the gas
phase. Under certain conditions, direct evaporation of charged ionic
species from the droplet is also possible
\cite{kebarle00,fernandezdelamora07}.

The onset of the phenomena described above has long been interpreted
with the help of the basic variational model that treats the
electrified liquid as a perfect conductor
\cite{rayleigh1882,taylor64,basaran89a,fernandezdelamora07,fontelos04}. To
describe the equilibrium configurations of a charged liquid droplet,
one seeks local minimizers of the following geometric energy
functional:
\begin{align}
  \label{eq:E}
  E(\Omega) = \sigma P(\Omega) + {Q^2 \over 2 C(\Omega)}, \qquad
  |\Omega| = V.
\end{align}
Here, $\Omega \subset \mathbb R^3$ is the spatial domain occupied by
the liquid, $|\Omega|$ denotes the volume of $\Omega$ fixed to the
value $V$, $ \sigma$ is the surface tension coefficient, $Q$ is the
total electric charge on the liquid droplet, $P(\Omega)$ is the
perimeter of the set $\Omega$ understood in the sense of De Giorgi:
\begin{equation}\label{aper} 
  P(\Omega) =    \sup \left\{ \int_\Omega \nabla \cdot \phi (y) \, dy:
    \, \phi \in C^1_c(\mathbb R^3;\mathbb R^3), \ |\phi|
    \leq 1 \right\},
\end{equation}  
which coincides with the surface area for regular sets
\cite{ambrosio}, and $C(\Omega)$ is the electrical capacitance of
$\Omega$, which is defined as (using the SI units)
\begin{align}
  \label{eq:C}
  C^{-1}(\Omega) = \inf_{\mu(\Omega) = 1} 
  \int_\Omega \int_{\Omega} {1 \over 4 \pi \eps_0 |x - y|} \, d \mu(x)
  \, d \mu(y), 
\end{align}
where the infimum is taken over all non-negative Borel mesures
(``charge densities'') supported on $\Omega$, and $\eps_0$ is the
permittivity of vacuum (assuming the ambient fluid has negligible
dielectric response). Note that the infimum in \eqref{eq:C} is
attained whenever $\Omega$ is compact, with the minimizing measure
concentrating on $\partial \Omega$ \cite{landkof}. Also, $C(\Omega)$
can be equivalently expressed as \cite{lieb-loss}
\begin{align}
  \label{eq:Cv}
  C(\Omega) = \eps_0 \inf_{\substack{u \in
  D^1(\mathbb R^3) \cap C(\mathbb R^3) \\ u \geq 1 \text{ in }
  \Omega}} \int_{\mathbb R^3}    |\nabla u|^2 dx,
\end{align}
and the infimum is attained when $\Omega$ is a compact set with a
sufficiently regular boundary.

The energy in \eqref{eq:E} is the sum of the surface energy associated
with the liquid-gas interface and the electrostatic self-energy of a
conducting body occupying $\Omega$ and carrying charge $Q$. It has
been widely assumed that this energy is locally minimized by a ball of
volume $V = \frac43 \pi R^3$, as long as the charge $Q$ does not
exceed the critical charge $Q_R$ given by
\begin{align}
  \label{eq:QR}
  Q_R = 8 \pi \sqrt{\eps_0 \sigma R^3}.
\end{align}
This result was obtained in the celebrated 1882 paper of Lord
Rayleigh, who performed a {\em linear stability} analysis of the
spherical droplet with respect to small non-spherical perturbations
\cite{rayleigh1882}. Similarly, the Taylor cone at the onset of jet
formation has been interpreted as a self-similar equilibrium solution
of the Euler-Lagrange equation associated with the energy in
\eqref{eq:E}, leading to the prediction of a unique opening half-angle
of about 49.3$^\circ$ \cite{taylor64}. Recent experiments to determine
the instability threshold for levitating charged drops confirm the
onset of instability at the Rayleigh limit charge $Q = Q_R$
\cite{duft02} (for earlier studies, see
\cite{doyle64,abbas67,schweizer71}), although lower threshold values
of $Q$ have also been reported in the literature
\cite{fernandezdelamora96,richardson89,taflin89,gomez94,widmann97}. The
latter could be attributed to the presence of an unstable prolate
spheroid equilibrium state bifurcating from the ball at the Rayleigh
limit, which may serve as a transition state for the spherical droplet
agitated by the motion of the surrounding gas or thermal noise
\cite{taylor64,duft02,basaran89a,fontelos04}. The agreement between
the predicted Taylor cone angle with those observed in experiments has
been found to be less satisfactory \cite{fernandezdelamora07}.

\section{Ill-posedness of the variational model of a perfectly
  conducting liquid drop}

The overall consistency of the classical model describing the
equilibrium charged droplet configurations presented in the
introduction has recently been put into question by the work of
Goldman, Novaga and Ruffini, in which it was noted that, surprisingly,
the energy in \eqref{eq:E} admits {\em neither global nor local
  minimizers} in the natural admissible classes of sets
\cite{goldman15}. Specifically, the following result was established
for minimizers of the energy in \eqref{eq:E} (in what follows, all the
physical and material constants are assumed to be fixed, leaving $V$
and $Q$ as the only free parameters).

\begin{theorem}[\cite{goldman15}, Theorems 1.1 and 1.3]
  \label{t:nongold}
  For every $V > 0$ and $Q > 0$, the following statements are true:
  \begin{enumerate}[(i)]
  \item There is no global minimizer of the energy $E$ defined in
    \eqref{eq:E} among sets of finite perimeter.
  \item The ball of volume $V$ is not a local minimizer of $E$ defined
    in  \eqref{eq:E} with respect to perturbations that are
    arbitrarily close to it in Hausdorff distance.
  \end{enumerate}
\end{theorem}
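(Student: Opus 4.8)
The plan is to handle the two statements separately, exploiting in both cases the mismatch between the scaling of perimeter (length squared) and that of capacitance (length, together with its remarkable insensitivity to the thinness of protrusions).

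For part (i), I would first record the elementary lower bound $E(\Omega)\ge \sigma P(\Omega)\ge \sigma P(B_V)$ valid for every admissible $\Omega$, where $B_V$ is a ball of volume $V$: the first inequality holds because the electrostatic term is nonnegative, the second is the isoperimetric inequality. Moreover this chain is strict, since equality in the isoperimetric inequality forces $\Omega$ to be a ball up to a null set, which is bounded and hence has $0<C(\Omega)<\infty$, so that $Q^2/(2C(\Omega))>0$. It therefore suffices to produce an admissible sequence with $E\to \sigma P(B_V)$. I would take $\Omega_n$ to be the disjoint union of a central uncharged ball of volume $V-n\delta_n$ and of $n$ tiny balls, each of volume $\delta_n$, each carrying charge $Q/n$, placed at mutual distances $L_n\to\infty$. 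Testing the infimum in \eqref{eq:C} with the charge distribution supported on the tiny balls gives an electrostatic energy at most $\tfrac{Q^2}{8\pi\eps_0\,n r_n}(1+o(1))+O(Q^2/L_n)$ with $r_n\propto\delta_n^{1/3}$, while the added perimeter is $O(n\delta_n^{2/3})$; choosing e.g. $\delta_n=n^{-2}$ and $L_n=n$, both vanish and the volume constraint is met. Hence $\inf E=\sigma P(B_V)$, which by the strict inequality above is never attained.

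For part (ii), fix $R$ with $|B_R|=V$ and $\eps>0$. I would construct a competitor $\Omega_{\ell,\rho}$ by attaching to $B_R$ a thin, smoothly capped radial finger of length $\ell\le\eps/2$ and cross-sectional radius $\rho$, smoothing the junction and rescaling slightly to restore the volume. The perimeter side is routine: $P(\Omega_{\ell,\rho})-P(B_R)\le c_1\rho\ell$ for $\rho$ small, the dominant term being the lateral area of the finger. The crux is the capacitance bound $C(\Omega_{\ell,\rho})-C(B_R)\ge c_2\,\ell^3/\log(\ell/\rho)$, which I would obtain from the variational definition \eqref{eq:C} by testing with the convex combination $(1-t)\mu_0+t\nu$, where $\mu_0$ is the equilibrium measure of $B_R$ and $\nu$ that of the finger, and optimizing over $t$. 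Writing $a=C(B_R)^{-1}$ and $b=C(\text{finger})^{-1}$, the test energy equals $a-2a\gamma t+(b-a+2a\gamma)t^2$, where $\gamma=1-a^{-1}\!\int\psi_{\mu_0}\,d\nu>0$; here one uses that $b$ diverges only logarithmically as $\rho\to0$, and that $\gamma\sim\ell/R$ because the finger protrudes into $\{|x|>R\}$, where the potential $\psi_{\mu_0}$ of $\mu_0$ is strictly below its boundary value. Minimizing in $t$ (legitimate, since $t^\ast=a\gamma/b\in(0,1)$ for $\ell$ small) converts this linear gain into the quadratic estimate $C(\Omega_{\ell,\rho})^{-1}\le a-a^2\gamma^2/b$, i.e. the asserted bound.

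Combining the two estimates, $E(\Omega_{\ell,\rho})-E(B_R)\le c_1\sigma\rho\ell-\tfrac12 c_2 Q^2\ell^3/\log(\ell/\rho)$; for fixed $\ell$ and $\rho\to0^+$ the negative term vanishes only logarithmically and thus eventually beats the positive term, which vanishes linearly in $\rho$. Since $\Omega_{\ell,\rho}$ is within Hausdorff distance $\ell\le\eps/2<\eps$ of $B_R$, the ball is not a local minimizer. The main obstacle will be the capacitance lower bound in (ii): it requires quantitative control of the equilibrium measure, or equivalently the equilibrium potential, of a thin radial finger — conveniently via comparison with an explicitly solvable prolate spheroid — so as to pin down both the logarithmic blow-up of $b$ and the $O(\ell/R)$ size of the deficit $\gamma$, and to check that the smoothing and the volume rescaling do not spoil these asymptotics; the remaining ingredients (isoperimetric inequality, monotonicity and scaling of capacitance, perimeter bookkeeping) are soft.
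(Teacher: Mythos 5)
Your proposal is correct, but it is worth noting that the paper does not actually prove Theorem~\ref{t:nongold}: it is imported from \cite{goldman15}, and the text only sketches the constructions used there. For part~(i) your argument coincides with that sketch --- a large uncharged ball plus $n$ tiny balls, each carrying charge $Q/n$, receding to infinity, with the elementary lower bound $E(\Omega)\ge\sigma P(\Omega)\ge\sigma P(B_V)$ (strict, since a ball has finite positive capacitance) identifying the unattained infimum; your choice $\delta_n=n^{-2}$, $L_n=n$ makes both the excess perimeter $O(n\delta_n^{2/3})$ and the electrostatic energy $O(1/(n\delta_n^{1/3}))+O(1/L_n)$ vanish, exactly as needed. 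For part~(ii) you genuinely depart from \cite{goldman15}, whose competitors are \emph{disconnected} (a cloud of tiny, highly charged balls hovering near $\partial B_R$); you instead attach a connected thin radial finger and gain energy by letting its equilibrium measure carry a small charge fraction, exploiting that the capacitance of a slender protrusion of length $\ell$ degrades only logarithmically in its radius $\rho$ while its lateral area is linear in $\rho$. This is precisely the mechanism behind the paper's own, stronger Theorem~\ref{t:non} and the tentacle estimate \eqref{eq:Omhr}--\eqref{eq:EOmhrBR}, so your route buys more than is asked (a connected, even simply connected, competitor) at the price of the quantitative potential-theoretic work you correctly flag: pinning down $b=I(\nu)\sim\ln(\ell/\rho)/(4\pi\eps_0\ell)$ via an inscribed prolate spheroid and $\gamma\sim\ell/(2R)$ from the explicit exterior potential $\psi_{\mu_0}(x)=(4\pi\eps_0|x|)^{-1}$. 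The convex-combination computation $I\big((1-t)\mu_0+t\nu\big)=a-2a\gamma t+(b-a+2a\gamma)t^2$ and its minimization are sound (your $t^\ast=a\gamma/b$ and minimum $a-a^2\gamma^2/b$ are the $b\gg a$ asymptotics of the exact expressions), and the volume-restoring rescaling only contributes $O(\rho^2\ell/R^3)E(B_R)$, which is negligible against the $O(\sigma\rho\ell)$ perimeter cost. The only cosmetic slip is that the constant multiplying $Q^2\ell^3/\log(\ell/\rho)$ in your final energy estimate should carry the factor $1/(2C(B_R)^2)$ coming from converting the capacitance gain into an energy gain; this does not affect the conclusion that for fixed small $\ell$ and $\rho\to0^+$ the logarithmically vanishing gain beats the linearly vanishing cost, so the ball is not a local minimizer in Hausdorff distance.
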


\noindent Recall that the Hausdorff distance $d_H$ bewteen sets $A$
and $B$ is defined as
\begin{align}
  \label{Hausd}
  d_H(A, B) = \max \left\{ \sup_{x \in A} \inf_{y \in B} |x - y|,
  \sup_{y \in B} \inf_{x \in A} |x - y| \right\},  
\end{align}
where $|\cdot|$ is the Euclidian distance, and measures the closeness
of their boundaries.

Non-existence of global minimizers in Theorem \ref{t:nongold} has to
do with the fact that one can construct a minimizing sequence for the
energy in \eqref{eq:E} that consists of one big ball carrying no
charge and many tiny balls carrying all the charge $Q$ off to
infinity. Furthermore, confining the support of the minimizing
sequence to a ball of slightly bigger radius than that of the original
spherical droplet, one can still produce a sequence of competitor sets
whose energy is strictly lower than that of a single ball. These sets
again consist of a single large uncharged ball and a cloud of tiny,
but highly charged balls within an arbitrarily small distance from the
original spherical droplet's surface (for details, see
\cite{goldman15}). These observations put into serious question the
validity of the conducting drop model.

An objection to the above criticisms of the classical model is that
all the competitor configurations considered in \cite{goldman15}
consist of disconnected sets. Thus, in order for such a competitor to
be realized in a physical system, highly charged tiny droplets need to
be detached from the surface of the parent droplet, leading
effectively to {\em charge evaporation}. The latter has been the
subject of many works by the modeling community (see, e.g.,
\cite{kebarle00} and references therein), and the basic finding has
been that the thermal activation barriers typically appear to be too
high for such a process to be significant. In particular, within the
continuum model governed by \eqref{eq:E}, barrier heights have been
estimated using an ansatz-based approach, predicting the prohibitively
high values in the range of several eV \cite{labowsky00}.

Nevertheless, in what follows we demonstrate that an assumption of
connectedness does not invalidate the conclusions of Theorem
\ref{t:nongold}. Namely, we show that there exist competitor sets that
are {\em homeomorphic} to a ball and are arbitrarily close to it in
Hausdorff distance that have strictly lower energy than that of a
single spherical droplet. Thus, we establish that the spherical
droplet with volume $V$ is {\em nonlinearly unstable} for all values
of $0 < Q < Q_R$, contrary to the prediction of the linear theory
\cite{rayleigh1882}, even for local perturbations that smoothly
distort a small portion of the boundary of the spherical droplet along
the normal. We note that it implies that within the classical theory
(which also ignores discreteness of charges) there is no energy
barrier to evaporate a small highly charged droplet. This result also
implies that, surprisingly, the variational problem governed by the
energy in \eqref{eq:E} is {\em ill-posed}, and thus the classical
model of conducting drops presents the picture that is physically
incomplete. Some regularizing physical mechanisms are needed at short
length scales to make it a physically valid model.

\begin{theorem}
  \label{t:non}
  For any $V > 0$ and $Q > 0$ there exists a smooth map
  $\phi_\delta : \mathbb S^2 \to (- \delta, \delta)$ such that
  if
  \begin{align}
    \label{eq:phid} 
    \Omega_{R,\delta} = \{ x \in \mathbb R^3 \ : \ |x| \leq R +
    \phi_\delta(x / |x|) \},
  \end{align}
  then $|\Omega_{R, \delta}| = V$ and $E(\Omega_{R,\delta}) < E(B_R)$,
  where $R > 0$ is such that $V = \frac43 \pi R^3$, for all
  $\delta > 0$ sufficiently small. Moreover, one can choose
  $\text{supp} \, \phi_\delta \subset B_{\delta/R}(\nu_0)$ for some
  $\nu_0 \in \mathbb S^2$.
\end{theorem}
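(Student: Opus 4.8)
The plan is to realize $\Omega_{R,\delta}$ by growing a thin radial \emph{spike} at a point $\nu_0\in\mathbb S^2$ while compensating for the added volume by a shallow depression around the spike's base, so that the whole perturbation stays inside the prescribed cap. This is the ``connected'' counterpart of the disconnected competitors of \cite{goldman15}: the cloud of tiny highly charged balls is replaced by a single slender protrusion of the conductor. Quantitatively, fix profiles $\eta,\chi\in C_c^\infty(\mathbb R^2)$ with $0\le\eta\le1$ and $\eta\equiv1$ near the origin, and, in geodesic normal coordinates $\xi$ on $\mathbb S^2$ centred at $\nu_0$, set
\[
  \phi_\delta(\xi)=L\,\eta(R\xi/a)-d\,\chi(R\xi/\rho),
\]
with $\rho\sim\delta$ comparable to the cap radius and $L,a$ small parameters satisfying $a\ll L\ll\delta$ to be fixed later; then $d>0$ is \emph{determined} by the constraint $|\Omega_{R,\delta}|=V$, and a direct volume computation gives $d\sim La^2/\rho^2$. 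Taking eventually $L=\delta/4$, $a=\delta^4$, $\rho=\delta/4$ makes $d\sim\delta^7$, so that $-d\le\phi_\delta\le L-d$ lies in $(-\delta,\delta)$, $\operatorname{supp}\phi_\delta\subset B_{\delta/R}(\nu_0)$, and $\Omega_{R,\delta}$ is a star-shaped (hence topologically trivial) smooth body, for all small $\delta$.

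Next I estimate the perimeter directly from the area formula for radial graphs,
\[
  P(\Omega_{R,\delta})=\int_{\mathbb S^2}(R+\phi_\delta)\sqrt{(R+\phi_\delta)^2+|\nabla_{\mathbb S^2}\phi_\delta|^2}\;d\mathcal H^2 ,
\]
\emph{without} linearising, since the spike has a large slope $|\nabla_{\mathbb S^2}\phi_\delta|\sim LR/a\gg1$. On the spike footprint $\{|\xi|\lesssim a/R\}$ the integrand is, to leading order, $(R+L\eta)\,|\nabla_{\mathbb S^2}(L\eta)|$, whose integral is the lateral area of the spike, $\sim La$; the base disk and the shallow moat contribute only $O(a^2)$ and $O(d^2)$ by the usual quadratic expansion around $B_R$. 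Hence $P(\Omega_{R,\delta})-P(B_R)\le C_1 La$ for $\delta$ small, with $C_1=C_1(\eta,\chi,R)$.

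The energy gain comes from the capacitance. Since $\phi_\delta\ge-d$, the sphere $\partial B_{R-d}$ lies in $\Omega_{R,\delta}$; let also $N\subset\Omega_{R,\delta}$ be a thin capsule of length $\sim L$ and cross-radius $\sim a$ running along the spike, at radii spanning $[R,R+L/2]$. Plugging the trial measure $\mu_t=(1-t)\mu_{R-d}+t\,\nu_N$ into \eqref{eq:C}, where $\mu_{R-d}$ is the uniform probability measure on $\partial B_{R-d}$ and $\nu_N$ is uniform on $N$, and expanding, one gets
\[
  C(\Omega_{R,\delta})^{-1}\le(1-t)^2\frac{1}{4\pi\eps_0(R-d)}+2t(1-t)\!\int\!\frac{d\nu_N(x)}{4\pi\eps_0|x|}+t^2\,\mathcal E(\nu_N).
\]
Two inputs close the estimate: by Newton's theorem the potential of $\mu_{R-d}$ on $N$ equals $\tfrac{1}{4\pi\eps_0|x|}$, whose $\nu_N$-average is at most $\tfrac{1}{4\pi\eps_0(R-d)}(1-cL/R)$ because at least half of $N$ sits at radius $\ge R+L/4$; and a direct computation of $\int\frac{d\nu_N(y)}{|x-y|}$ over the capsule bounds the self-energy by $\mathcal E(\nu_N)\le C_3\log(L/a)/(4\pi\eps_0 L)$, the same scaling as the reciprocal capacitance of a slender needle of aspect ratio $L/a$. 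Optimising over $t\in(0,1)$ — the optimum being $t^*\sim L^2/(R^2\log(L/a))\ll1$ — yields
\[
  C(\Omega_{R,\delta})^{-1}\le\frac{1}{4\pi\eps_0 R}+\frac{C_4\,d}{\eps_0 R^2}-\frac{c'\,L^3}{\eps_0 R^4\log(L/a)} .
\]
Combining with the perimeter bound and $d\sim La^2/\rho^2$,
\[
  E(\Omega_{R,\delta})-E(B_R)\le\sigma C_1 La+\frac{Q^2}{2}\left(\frac{C_4'\,La^2}{\eps_0 R^2\rho^2}-\frac{c'\,L^3}{\eps_0 R^4\log(L/a)}\right),
\]
and with $L=\delta/4$, $a=\delta^4$, $\rho=\delta/4$ the three terms are of order $\delta^5$, $\delta^9$ and $-\delta^3/\log(1/\delta)$; the negative term dominates, so $E(\Omega_{R,\delta})<E(B_R)$ for all small $\delta$, which is the assertion.

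The main obstacle is the capacitance lower bound: one must exhibit an explicit admissible measure inside the spike whose self-energy $\mathcal E(\nu_N)$ grows only logarithmically in $L/a$, while its electrostatic interaction with the rest of the droplet falls short of that of the undisturbed spherical distribution by a definite amount $\sim L/R$; it is the competition between this gain and the self-energy penalty that both pushes the energy below $E(B_R)$ and forces the scale separation $a\ll L\ll\delta$ (and thereby the large gradient that makes the competitor invisible to the linear Rayleigh analysis). The remaining ingredients — the non-linearised area computation for the spike, the volume bookkeeping that fixes $d$, and the verification of the support and range constraints on $\phi_\delta$ — are routine once this estimate is in place.
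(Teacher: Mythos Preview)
Your argument is correct and follows the same strategy as the paper's proof: grow a slender radial spike, compensate the volume with a shallow moat, and lower the Coulomb energy by placing a small fraction of the charge in the spike, so that the logarithmic self--energy penalty of the needle is beaten by the gain from moving charge to larger radius. The only differences are bookkeeping --- you optimize the charge fraction $t$ and use the polynomial scale separation $a=\delta^4\ll L=\delta/4$, whereas the paper fixes the spike charge to equal that of a removed spherical cap of radius $r'$ and uses the exponential choice $r=\delta e^{-R/\delta}$; note also that with your parameters the middle term is of order $\delta^7$ rather than $\delta^9$, which is harmless since it is still dominated by the $\delta^3/\log(1/\delta)$ gain.
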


\noindent Here $B_R$ denotes the ball of radius $R$ centered at the
origin in $\mathbb R^3$ and $B_\delta(\nu_0)$ denotes a ball of radius
$\delta$ centered at $\nu_0$ on $\mathbb S^2$. The set
$\Omega_{R,\delta}$ is the subgraph of the function
$r = R + \phi_\delta(\nu)$ in spherical coordinates. The perturbation
$\phi_\delta$ is illustrated in Fig. \ref{f:pert}(a) and has the form
of a slender axially-symmetric protrusion from the sphere, with a
small indentation around to conserve volume.

\begin{figure}
  \centering
  \includegraphics[width=8cm]{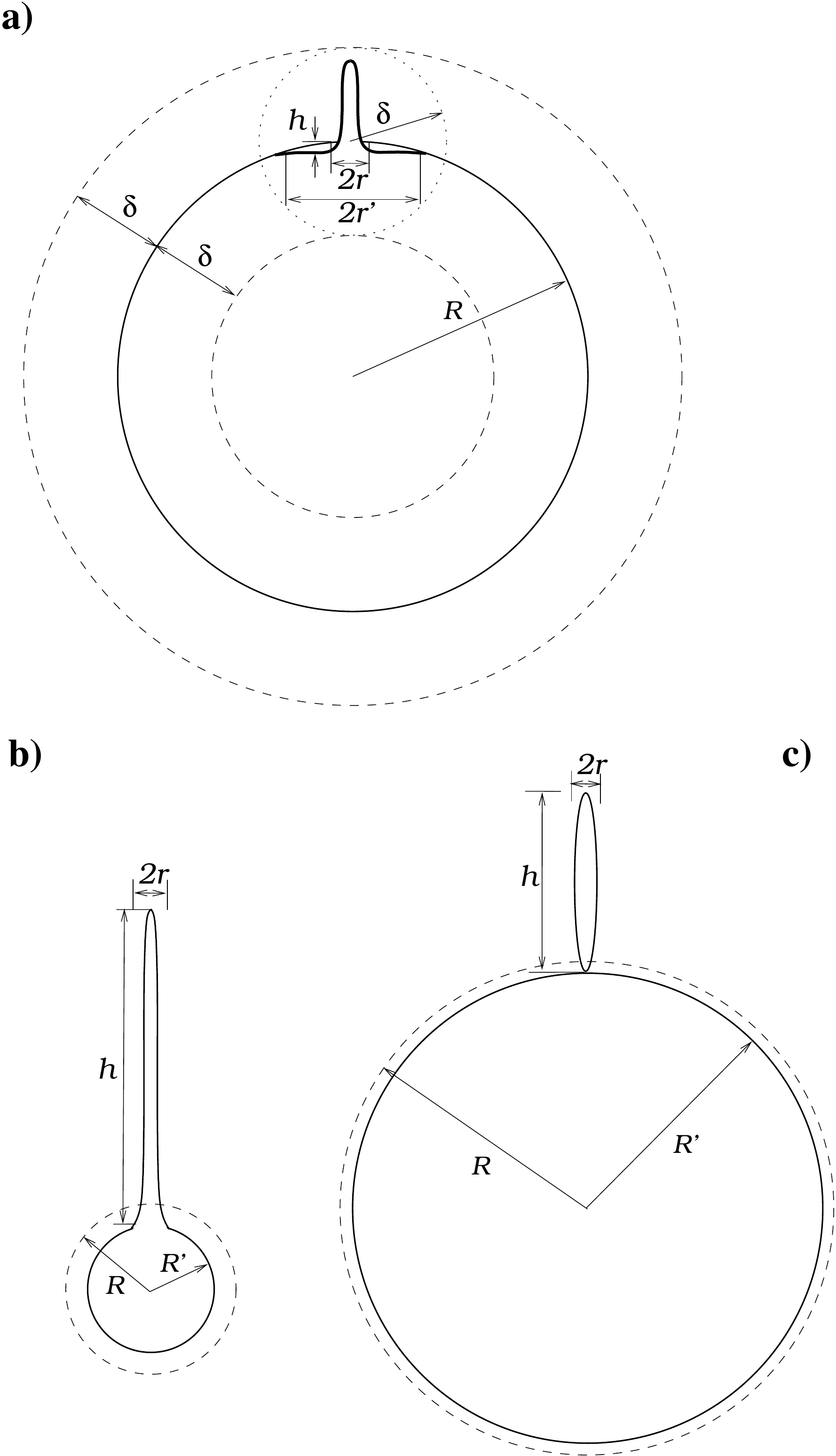}
  \caption{Different choices of perturbations of $B_R$ that may lower
    the energy in  \eqref{eq:E}: (a) a small smooth localized
    distortion of the interface; (b) a long tentacle; and (c) a
    prolate spheroid joined to a shrunk ball. }
  \label{f:pert}
\end{figure}

\begin{proof}
  Let $\eta \in C^\infty(\mathbb R)$ be a cutoff function such that
  $\eta'(t) \leq 0$ for all $t \in \mathbb R$, $\eta(t) = 1$ for
  $t \leq 1$ and $\eta(t) = 0$ for $t \geq 2$. We introduce
  \begin{align}
    \label{eq:rr}
    r = \delta e^{-R/\delta}, \qquad r' = (r R \delta^2)^{1/4},
  \end{align}
  and note that $r \ll r' \ll \delta$ for $\delta \ll R$. We then
  define the function $\phi_\delta$ in  \eqref{eq:phid} as follows:
  \begin{align}
    \label{eq:phidr}
    \phi_\delta(\nu) = \delta \eta \left( { R |\nu - \nu_0| \over r}
    \right) - h \eta \left( { 2 R |\nu - \nu_0| \over r'} \right)
    \left\{ 1 - \eta \left( { 2 R |\nu - \nu_0| \over r'} \right) \right\}, 
  \end{align}
  where $\nu_0$ is some fixed point on $\mathbb S^2$ and
  $h \sim r^2 \delta / {r'}^2$ is chosen so that
  $|\Omega_{R,\delta}| = V$ (here and in the rest of the proof the
  symbol ``$\sim$'' indicates asymptotic equivalence as $\delta \to 0$
  up to a universal positive constant, term-wise for sums). The latter
  is always possible when $\delta \ll R$, and under this assumption we
  also have $h \ll r'$.

  We next estimate from above the energy difference
  $\Delta E = E(\Omega_{R,\delta}) - E(B_R)$. In doing so, we can
  choose a suitable measure $\mu$ in the right-hand of
  \eqref{eq:C}. We take
  \begin{align}
    \label{eq:mu}
    d\mu = {1 \over 4 \pi R^2} \chi_{\partial B_R
    \backslash B_{r'}(R \nu_0)} dS + {q \over Q |C_{r,\delta}|}
    \chi_{C_{r,\delta}} dx, 
  \end{align}
  where $\chi_A$ denotes the characteristic function of the set $A$,
  $dS$ stands for the surface measure concentrated on $\partial B_R$,
  $q = Q {r'}^2/ (4 R^2) \ll Q$ is the charge in the spherical cap of
  radius $r'$ and
  $C_{r,\delta} = \{ x \in \mathbb R^3 : R < |x| < R + \delta, \ x /
  |x| \in B_{r/R}(\nu_0) \} \subset \Omega_{R,\delta}$
  is a truncated cone. Namely, we transfer the charge $q$ contained in
  the spherical cap of radius $r'$ and spread it uniformly into a set
  which for $\delta \ll R$ is essentially a straight cylinder of
  radius $r$ and height $\delta$ above $\partial B_R$ and below
  $\partial \Omega_{R,\delta}$. It is not difficult to see that with
  this configuration we have $\Delta E \leq \Delta E_0$, where
  \begin{align}
    \label{eq:dE0}
    \Delta E_0 \sim \sigma (r \delta + h^2) + {q^2 \over \eps_0 \delta} \ln
    \left( { \delta \over r} \right) - {q Q \over \eps_0 R^2} \, \delta.
  \end{align}
  With our choices of $r$ and $r'$, \eqref{eq:dE0} and \eqref{eq:QR}
  yield
  \begin{align}
    \label{eq:dE00}
    \Delta E_0 \sim {Q^2 \over \eps_0 R} \left\{ \left( {\delta Q_R^2
    \over R Q^2} + 1 \right) \left( {\delta \over R} \right)
    e^{-R/\delta} - \left( {\delta \over R} \right)^{5/2} e^{-R /
    (2\delta)} \right\}.  
  \end{align}
  Thus $\Delta E_0 < 0$ for all sufficiently small $\delta$.
\end{proof}

We also note that choosing a configuration $\Omega_{h,r}$ shown in
Fig. \ref{f:pert}(b), which consists of a smooth approximation to the
union of a ball of radius $R'$ and a long cylindrical ``tentacle'' of
radius $r \ll R$ and height $h \gg R$, provided that
$\frac43 \pi {R'}^3= \frac43 \pi R^3 - \pi r^2 h$, one can see that by
spreading the charge $Q$ over the surface of the tentacle the energy
of such a configuration can be made arbitrarily close to that of an
uncharged ball. Indeed, for $r \ll h$ one can write
\begin{align}
  \label{eq:Omhr}
  E(\Omega_{h,r}) \leq 4 \pi \sigma R^2 + 2 \pi \sigma r h + {Q^2
  \over 4 \pi \eps_0 h} \left\{ \ln \left( {2 h \over r} \right) - 1
  \right\} + o(1),
\end{align}
where we used an asymptotic formula for the capacitance of a slender
cylinder \cite{butler80}.  Therefore, optimizing this expression in
$r$ for fixed $h$, we get $r(h) = 8 Q^2 R^3 / (Q_R^2 h^2)$ and
\begin{align}
  \label{eq:EOmhrBR}
  E(\Omega_{h,r(h)}) \leq E(B_R) - {Q^2 \over 4 \pi \eps_0 h} \left\{ {h
  \over 2 R} - \ln \left( { h^3 Q_R^2 \over 4 R^3 Q^2}
  \right) \right\} + o(1).
\end{align}
One sees from these computations that for $Q < Q_R$ and
$h \gtrsim R \ln (Q_R / Q)$ we once again have
$E(\Omega_{h,r(h)}) < E(B_R)$ and, moreover,
$\lim_{h \to \infty} E(\Omega_{h,r(h)}) = 4 \pi \sigma R^2$. In
particular, this implies that global minimizers of the energy in
\eqref{eq:E} do not exist among connected sets for any $Q > 0$.

\begin{remark}
  \label{r:non}
  The proof of Theorem \ref{t:non} can be easily extended to any
  compact set $\Omega$ with smooth boundary that is a critical point
  of the energy $E$.
\end{remark}

\noindent In other words, the ill-posedness of the variational problem
associated with $E$ is not related with the specific assumption of a
spherical drop as the equilibrium configuration in Theorem
\ref{t:non}. We note that well-posedness may be restored if one seeks
minimizers in the class of sets with sufficiently smooth boundary
(say, of class $C^{1,1}$). In particular, the ball is a local
minimizer in the class of sets that satisfy the $\delta$-ball
condition (see \cite[Definition 2.18]{goldman15}) for all
$0 < Q < Q_c$, with $Q_c$ depending on $\delta > 0$ \cite[Theorem
1.4]{goldman15}. This is consistent with the linear stability result
of Rayleigh \cite{rayleigh1882}. Moreover, possibly decreasing the
value of $Q_c$, one sees that the ball is also the unique global
minimizer of $E$ in this class \cite[Theorem 1.7]{goldman15}. Still,
the restriction on the regularity of the admissible sets appears to be
rather artificial. In fact, as can be seen from Theorem \ref{t:non},
the value of $Q_c$ in the above results must go to zero as
$\delta \to 0$, which is the limit in which the regularity assumption
on the admissible sets disappears.

Alternatively, existence of minimizers for $E$ is restored in the
class of {\em convex} sets \cite{goldman15b}. Therefore, failure of
existence must necessarily be associated with the competitor sets
$\Omega$ that are concave somewhere. This is consistent with the
behavior of the Taylor cone, as the liquid typically assumes a convex
configuration prior to the formation of the cone
\cite{taylor64,fernandezdelamora07}, while convexity is broken by the
formation of the liquid jet.

\section{Well-posedness of the variational model with finite
  screening}

It is appropriate to compare the results just obtained with those for
another version of the energy in \eqref{eq:E}, namely, the {\em liquid
  drop model} of the atomic nucleus
\cite{gamow30,weizsacker35,bohr39}. From the variational standpoint,
this model is defined by the energy
\begin{align}
  \label{eq:E0}
  E_0(\Omega) = \sigma P(\Omega) + {Q^2 \over 8 \pi \eps_0  V^2}
  \int_\Omega \int_\Omega {1 \over |x - y|} \, dx \, dy, \qquad
  |\Omega| = V,
\end{align}
and can be obtained from \eqref{eq:E} by fixing the measure $\mu$ in
\eqref{eq:C} to be the uniform probability measure over $\Omega$
(uniform charge distribution). For this model (upon rescaling),
Kn\"upfer and Muratov \cite{km:cpam14} recently proved that minimizers
of $E_0$ among sets of finite perimeter exist for all $0 < Q < Q_1$,
while no minimizers exist for all $Q > Q_2 \geq Q_1$ (for
non-existence, see also \cite{lu14}). Moreover, global minimizers of
$E_0$ are balls for all $0 < Q < Q_0$ for some $Q_0 \leq Q_1$, and it
has been further conjectured that $Q_0 = Q_1 = Q_2$
\cite{choksi11}. Regardless of the validity of this conjecture, a ball
of volume $V$ is a local minimizer of the energy if and only if
$0 < Q < Q_{c2}$, where $Q_{c2}$ is the critical charge for the onset
of fission \cite{bohr39,acerbi13,julin13}.

Non-existence of minimizers for the energy in \eqref{eq:E0} has to do
with the tendency of the minimizing sequences to become disconnected
for large enough charge densities, with the connected components
moving off to infinity away from each other to minimize their mutual
Coulombic repulsion \cite{kmn:cmp}. Existence of minimizers is
restored for all $Q > 0$ when the set $\Omega$ is confined to a ball
of sufficiently large radius \cite{rigot00}. In this sense the
variational model associated with the energy in \eqref{eq:E0} is
well-posed for all charges, in sharp contrast with the one in
\eqref{eq:E}. Furthermore, minimizers of $E_0$ are regular, in the
sense that the boundary of a minimizing set consists of at most
finitely many smooth two-dimensional manifolds
\cite{rigot00,km:cpam14,julin13}.

The difficulty with the variational problem associated with the energy
in \eqref{eq:E} has to do with the freedom in the choice of the
measure $\mu$ (charge distribution) that likes to concentrate on the
singularities of the boundary of $\Omega$. In some sense, the measures
allowed by boundedness of the Coulombic energy can be more singular
than the measure associated with the essential boundary of a set of
finite perimeter, leading to a kind of incompatibility between the
two. In reality, several mechanisms may limit the ability of charges
to concentrate on the liquid interface. In the following, we show that
one such mechanism is provided by the finite screening length in the
conducting liquid.

To proceed, we need to incorporate the entropic effects associated
with the presence of free ions in the liquid. We start with the free
energy of a dilute strong electrolyte containing, for simplicity, only
two monovalent ionic species \cite{landau5,landau8}:
\begin{align}
  \label{eq:F}
  F(\Omega, n_+, n_-) = \sigma P(\Omega) + \frac{\eps_0}{2}
  \int_{\mathbb R^3} a_\Omega(x) |\nabla v|^2 dx \qquad \quad \notag
  \\ 
  + k_B T \int_\Omega
  \left( n_+ \ln {n_+ \over n_0} + n_- \ln {n_- \over n_0} \right) dx.
\end{align}
Here $a_\Omega(x) = 1 + (\eps - 1) \chi_\Omega(x)$, where
$\eps \geq 1$ is the dielectric constant of the liquid and
$\chi_\Omega$ is the characteristic function of $\Omega$, $k_B T$ is
temperature in the energy units, $n_+$ and $n_-$ are the number
densities of the positive and negative ions, respectively,
$n_0 = {1 \over 2 V} \int_\Omega (n_+ + n_-) \, dx$ is the average
free ion density per species, $v$ is the electrostatic potential
solving
\begin{align}
  \label{eq:v}
  -\eps_0 \nabla \cdot (a_\Omega(x) \nabla v) = \rho \qquad \text{in} \
  \mathcal D'(\mathbb R^3), 
\end{align}
where the charge density $\rho = e (n_+ - n_-)$ in $\Omega$ and zero
outside $\Omega$, and $e$ is the elementary charge. We next assume
that $|\rho| \ll e n_0$, i.e., that the deviations from the mean for
each ionic component are small, and expand the entropy term in $\rho$,
with $n_\pm \simeq n_0 \pm {\rho \over 2 e}$. This yields a
Debye-H\"uckel-type free energy
\begin{align}
  \label{eq:F0}
  \mathcal F(\Omega, v) = \sigma P(\Omega) + \frac{\eps_0}{2}
  \int_{\mathbb R^3} a_\Omega(x) |\nabla v|^2 dx 
  + {k_B T \over 4 e^2 n_0} \int_\Omega \rho^2 \, dx,
\end{align}
where now $\rho$ is assumed to be defined by $v$ via \eqref{eq:v}.

Since we are interested in the local well-posedness of the variational
problem governed by $\mathcal F$, we are further going to assume that
the set $\Omega$ is confined to a spherical container $B_R$ such that
$|B_R| > V$, so that the possibility of connected components of
$\Omega$ escaping to infinity is precluded. Under this assumption, we
show that minimizers of the energy $\mathcal F$ exist for all values
of $Q > 0$. More precisely, let us define an admissible class
\begin{align}
  \label{eq:A}
  \mathcal A  = \left\{ (\Omega,v):\ \Omega \subset B_R,\
  |\Omega |=V,\
  \int_\Omega \rho \, dx = Q,
  \ \rho=0 \text{ in } \mathbb R^3 \backslash \Omega
  \right\}\,,
\end{align}
where $\Omega\subset\mathbb R^3$ is a set of finite perimeter and
$v\in D^1(\mathbb R^3)$ is such that $\rho$ defined by \eqref{eq:v}
belongs to $L^2(\mathbb R^3)$. Then we have the following result.

\begin{theorem}
  \label{t:exist}
  For any $V > 0$, $Q > 0$ and $R > 0$ such that $|B_R| > V$ there
  exists a minimizer of $\mathcal F$ in $\mathcal A$.
\end{theorem}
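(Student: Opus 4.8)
The plan is to use the direct method of the calculus of variations. First I would take a minimizing sequence $(\Omega_k, v_k) \in \mathcal A$ for $\mathcal F$. Since all $\Omega_k \subset B_R$ with $|\Omega_k| = V$ and the perimeter term is controlled by $\mathcal F(\Omega_k, v_k) \leq C$, the sets $\Omega_k$ have uniformly bounded perimeter, so by compactness of BV functions in a bounded domain, up to a subsequence $\chi_{\Omega_k} \to \chi_\Omega$ in $L^1$ for some set of finite perimeter $\Omega \subset B_R$ with $|\Omega| = V$, and $P(\Omega) \leq \liminf_k P(\Omega_k)$ by lower semicontinuity of perimeter. Simultaneously, the uniform bound on $\frac{\eps_0}{2}\int a_{\Omega_k}|\nabla v_k|^2 dx$ (note $a_{\Omega_k} \geq 1$) gives a uniform bound on $\|\nabla v_k\|_{L^2(\mathbb R^3)}$, so $v_k \rightharpoonup v$ weakly in $D^1(\mathbb R^3)$ up to a subsequence; and the bound on $\frac{k_B T}{4 e^2 n_0}\int_{\Omega_k}\rho_k^2 dx$ gives $\rho_k \rightharpoonup \rho$ weakly in $L^2(\mathbb R^3)$ (extending $\rho_k$ by zero), with $\rho$ supported in $\Omega$ because $\chi_{\Omega_k} \to \chi_\Omega$ in $L^1$ forces the weak limit to vanish a.e.\ outside $\Omega$.

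The next step is to check that $(\Omega, v) \in \mathcal A$, i.e.\ that $\rho$ is the charge density generated by $v$ via \eqref{eq:v} and that $\int_\Omega \rho\, dx = Q$. Passing to the limit in \eqref{eq:v} in the sense of distributions is the delicate point: the equation reads $-\eps_0 \nabla\cdot(a_{\Omega_k}\nabla v_k) = \rho_k$, and while $\rho_k \rightharpoonup \rho$ weakly in $L^2$ and $\nabla v_k \rightharpoonup \nabla v$ weakly in $L^2$, the coefficient $a_{\Omega_k} = 1 + (\eps-1)\chi_{\Omega_k}$ converges only strongly in $L^1$ (and boundedly a.e.), so the product $a_{\Omega_k}\nabla v_k$ converges weakly in $L^1$ — testing against $C_c^\infty$ vector fields, $a_{\Omega_k}\nabla v_k \rightharpoonup a_\Omega \nabla v$ since strong $L^p$ convergence of $a_{\Omega_k}$ (any $p<\infty$, as it is bounded) pairs with weak $L^2$ convergence of $\nabla v_k$. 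Hence $-\eps_0\nabla\cdot(a_\Omega \nabla v) = \rho$ in $\mathcal D'(\mathbb R^3)$, so $(\Omega,v)$ is admissible for the PDE constraint. The charge constraint $\int \rho\, dx = Q$ should pass to the limit by testing the weak $L^2$ convergence of $\rho_k$ against a fixed smooth cutoff equal to $1$ on $B_R \supset \mathrm{supp}\,\rho_k$ (here confinement to $B_R$ is essential, guaranteeing no mass of $\rho_k$ escapes to infinity).

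Finally I would establish lower semicontinuity of each term of $\mathcal F$ along the sequence: the perimeter term as above; the term $\frac{k_B T}{4 e^2 n_0}\int \rho^2\,dx$ by weak $L^2$ lower semicontinuity of the norm; and the Dirichlet-type term $\frac{\eps_0}{2}\int_{\mathbb R^3} a_{\Omega_k}|\nabla v_k|^2\,dx$. For the last one I would split it as $\frac{\eps_0}{2}\int|\nabla v_k|^2 + \frac{\eps_0(\eps-1)}{2}\int_{\Omega_k}|\nabla v_k|^2$; the first piece is weakly lower semicontinuous in $D^1$, and for the second I would use that $\chi_{\Omega_k}\to\chi_\Omega$ strongly in $L^1$ together with $\nabla v_k\rightharpoonup\nabla v$ weakly in $L^2$, which by a standard argument (e.g.\ writing $\int\chi_{\Omega_k}|\nabla v_k|^2 \geq \int\chi_{\Omega_k}(2\nabla v_k\cdot\nabla v - |\nabla v|^2)$ and passing to the limit) yields $\liminf_k \int_{\Omega_k}|\nabla v_k|^2 \geq \int_\Omega|\nabla v|^2$. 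Summing the three inequalities gives $\mathcal F(\Omega,v) \leq \liminf_k \mathcal F(\Omega_k,v_k) = \inf_{\mathcal A}\mathcal F$, so $(\Omega,v)$ is a minimizer. The main obstacle I anticipate is the joint passage to the limit in the PDE \eqref{eq:v} combined with verifying that the weak $L^2$ limit $\rho$ remains supported in $\Omega$ and still carries total charge exactly $Q$ — i.e.\ ruling out any loss of charge in the limit, which is precisely where the confinement hypothesis $\Omega \subset B_R$ does the work.
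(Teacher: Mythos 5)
Your proposal is correct and follows essentially the same route as the paper's proof: the direct method, with compactness of the minimizing sequence in $BV$, $D^1(\mathbb R^3)$ and $L^2(\mathbb R^3)$, lower semicontinuity of each term of $\mathcal F$, and verification that the limit pair is admissible (the support and total-charge constraints on $\rho$ being exactly where the confinement $\Omega \subset B_R$ is used). The only difference is cosmetic: where the paper invokes Ioffe's semicontinuity theorem for the weighted Dirichlet term $\int a_\Omega(x)|\nabla v|^2\,dx$, you give an explicit elementary argument pairing strong $L^p$ convergence of $\chi_{\Omega_k}$ with weak $L^2$ convergence of $\nabla v_k$; both are valid.
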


\begin{proof}
  We follow the approach of \cite{lin93,ambrosio93} developed for
  purely dielectric problems and apply the direct method of calculus
  of variations. We consider a minimizing sequence $(\Omega_n,v_n)$
  for $\mathcal F$ in $\mathcal A$. By the energy bound, up to
  extracting a subsequence, there exist $(\Omega,v)$ such that
  $\chi_{\Omega_n}\rightharpoonup \chi_\Omega$ in $BV(\mathbb R^3)$
  and $v_n\rightharpoonup v$ in $D^1(\mathbb R^3)$.  Up to extracting
  a further subsequence, we have that the functions $\rho_n$ defined
  by \eqref{eq:v} with $v$ replaced by $v_n$ converge weakly in
  $L^2(\mathbb R^3)$ to a function $\rho$ which also solves
  \eqref{eq:v} with the limit function $v$.

  By the semicontinuity of the perimeter \cite{ambrosio} and Ioffe
  semicontinuity result \cite{ioffe77}, we get that
  \begin{align}
    \mathcal F(\Omega,v) \le \liminf_{n\to
    \infty}\mathcal F(\Omega_n,v_n)=\inf_{\mathcal A} \mathcal F.    
  \end{align}
  It remains to prove that $(\Omega,v)\in\mathcal A$.  The fact that
  $\Omega\subset B_R$ and $|\Omega|=V$ follows from the convergence of
  $\chi_{\Omega_n}$ to $\chi_\Omega$ in $L^1(\mathbb R^3)$.
  % The
  % condition $\rho\ge 0$ can be written as
  % $\int \rho\phi \, dx \ge 0$
  % for any test function $\phi\in L^2(\mathbb R^3)$, and follows from
  % the fact that $\rho_n\ge 0$ and $\rho_n\weakto\rho$ in
  % $L^2({\mathbb R}^3)$.
  The conditions
  $\int_\Omega\rho \, dx =\int_{\mathbb R^3} \rho \chi_\Omega \, dx =
  Q$
  and $\rho=0$ on $\mathbb R^3 \backslash \Omega$, which can be
  written as $\int \rho (1-\chi_\Omega)\phi \, dx = 0$, for any
  $\phi \in \mathcal D(\mathbb R^3)$, follows from the strong
  convergence of $\chi_{\Omega_n}$ to $\chi_\Omega$ in
  $L^2(\mathbb R^3)$ and the weak convergence of $\rho_n$ to $\rho$ in
  $L^2(\mathbb R^3)$.  We thus proved that $(\Omega,v)\in\mathcal A$
  and hence is a minimizer of $\mathcal F$.
\end{proof}

This result is again in sharp contrast with the one in Theorem
\ref{t:nongold}. Further results concerning the minimizers of
$\mathcal F$ such as the regularity of their interfaces and the shape
and connectedness of minimizers are expected to follow. In particular,
in the special case of $\eps = 1$, i.e., when the dielectric
polarizability of the liquid could be neglected (or in the case of a
dielectrically matched ambient fluid), one should be able to proceed
along the lines of the arguments in \cite{km:cpam14,cicalese13} to
establish smoothness of the minimizers, as well as the fact that the
minimizers are balls for all $0 < Q < Q_c$, for some $Q_c > 0$ (even
in the absence of confinement). For $Q > Q_c$ and the confinement
radius $R$ sufficiently large one would expect the minimizers to
develop into a number of connected components of size not exceeding
much the Debye radius $r_D = \sqrt{\eps_0 \eps k_B T / (2 n_0 e^2)}$
(for a similar phenomenon in the absence of screening, see
\cite{kmn:cmp}). The reason for the latter is because for spherical
droplets whose size is much greater than $r_D$ the energy in
\eqref{eq:E} is a good approximation for the one in \eqref{eq:F0},
which then implies that one could decrease $\mathcal F$ by splitting
the droplet into many small ones and redistributing the charge, just
like in the case of the energy in \eqref{eq:E}.

\section{Discussion}

To summarize, the classical variational model of a charged liquid drop
that treats the liquid as a perfect conductor is mathematically
ill-posed. Neither global, nor local minimizers exist for this model,
with or without the presence of confinement. This is related to the
fact that the energy of a smooth charged equipotential droplet can
always be decreased by growing small, sharp protrusions that are
highly charged. Thus, any amount of spatially uncorrelated noise
should be enough to trigger the nonlinear instability identified by
us.

We note that this nonlinear instability can be related to another,
{\em linear} instability that has recently been demonstrated for thin
insulating membranes separating two different electrolytes in an
applied electric field \cite{ambjornsson07}. There it was shown that
at large enough applied voltages depending on the characteristics of
the electrolytes the membrane undergoes a long-wave instability, which
can be interpreted as a consequence of the effective membrane surface
tension coefficient turning {\em negative}. Naturally, negative
surface tension is a sign of ill-posedness of the model considered in
\cite{ambjornsson07} in the macroscopic limit. In our case, the
problem is even more singular, in the sense that the energy in
\eqref{eq:E} exhibits a {\em nonlinear} instability even for
arbitrarily small values of charge (within the continuum
approximation).

We now attempt to reconcile the differences between the models
governed by the energies in \eqref{eq:E} and \eqref{eq:F0}, as
expressed by Theorems \ref{t:nongold} and \ref{t:exist}, in connection
with the behavior of real charged drops. To this end, we obtain a
quantitative upper bound for the energy $E$ of the configurations in
the form of spherical droplets with long slender protrusions. To get
an analytical handle on the energy, we assume that the protrusion has
the shape of a prolate spheroid of radius $r$ and height $h$ that
touches a ball of radius $R' = R \sqrt[3]{1 - h r^2 / (2 R^3)}$, which
is chosen so as to conserve the volume, starting from a spherical
droplet of radius $R$. See Fig. \ref{f:pert}(c) for an
illustration. Note that we do not make any assumptions on the values
of $r$ and $h$, as long as $R'$ is positive. The upper bound for the
energy in \eqref{eq:E} is then obtained by assuming the equilibrium
charge density (as in the definition of the capacitance) with total
charge $q$ on the surface of the spheroid and the uniform charge
density with total charge $Q - q$ on the surface of the ball.

Denoting the set above by $\Omega_{h,r}$, we have
\begin{align}
  \label{eq:EOmhr}
  E(\Omega_{h,r}) \leq E_\text{surf} + E_\text{ball} + E_\text{ell} +
  E_\text{int}, 
\end{align}
where $E_\text{surf}$, $E_\text{ball}$, $E_\text{ell}$ and
$E_\text{int},$ are the total surface energy, the electrostatic
self-energy of the ball, the electrostatic self-energy of the spheroid
and the electrostatic interaction energy between the ball and the
spheroid, respectively. Because of our assumptions on the shapes, all
but the last quantity above have closed form expressions (for the
capacitance of the spheroid, see, e.g., \cite{landkof}, the other
expressions are elementary):
\begin{align}
  \label{eq:EEE}
  E_\text{surf} & = \pi  \sigma  \left(4 {R'}^2 + 2 r^2 + \frac{r h^2
                  \arcsin \sqrt{1-\frac{4 
                  r^2}{h^2}} }{\sqrt{h^2-4 r^2}} \right), \\
  E_\text{ball} & = {(Q - q)^2 \over 8 \pi \eps_0 {R'}}, \\
  E_\text{ell} & = \frac{q^2 \ln \left[  h \left(\sqrt{h^2-4 
                 r^2}+h\right) /  (2 r^2)-1 \right]
                 }{8 \pi \eps_0 \sqrt{h^2-4 r^2}}. 
\end{align}
Furthermore, the interaction energy is easily seen to be estimated
above as follows:
\begin{align}
  \label{eq:Eint}
  E_\text{int} \leq {q (Q - q) \over 8 \pi \eps_0} \left( {1 \over R'}
  + {1 \over R' + h} \right).
\end{align}
Note that for $h \ll R$ this expression gives an asymptotically sharp
value for the interaction energy.

Given the values of $r$ and $h$, we can minimize in $q$ the expression
obtained by adding up the right-hand sides of
\eqref{eq:EEE}--\eqref{eq:Eint}. The resulting cumbersome expression
may then be evaluated for any $r$ and $h$ such that
$2 r < h < 2 R^3/r^2$. In reality, we are interested in the regime
when both $r$ and $h$ are much smaller than $R$. The difference
between the obtained value and $E(B_R)$ is denoted as
$\Delta E(r, h)$, with $\Delta E(r, h) > 0$ implying that the ball of
radius $R$ has lower energy and vice versa.

We computed $\Delta E(r, h)$ as a function of $h$ for different
choices of $r$ in the case of water drops at 20$^\circ$C of radius
$R = 10\ \mu$m (a typical value from the experiments) charged to 50\%
of the Rayleigh limit, i.e., for $\sigma = 0.073$ N/m and
$Q = \frac12 Q_R$. We found that for $r$ fixed and
$r \lesssim h \lesssim R$, the value of $\Delta E(r, h)$ is initially
an increasing function of $h$ for small $h$, reaching a maximum at
$h = h_\text{max}$. After that, the value of $\Delta E(r, h)$ is a
decreasing function of $h$ and becomes negative at some
$h = h_0 > h_\text{max}$, indicating an instability. When the value of
$r$ is decreased, the value of $\Delta E(r, h_\text{max})$ decreases
as well. A natural short scale cutoff value for $r$ is $r = r_D$,
which indicates the length scale at which the assumption of the
droplet being a perfect conductor is no longer justified. For large
salt concentrations, the value of $r_D$ goes down to $r_D \simeq 1$
nm, which is also comparable to interatomic distances. Substituting
this value of $r$ to the obtained expression, we obtain the dependence
shown in Fig. \ref{f:barr}.

\begin{figure}
  \centering
  \includegraphics[width=10cm]{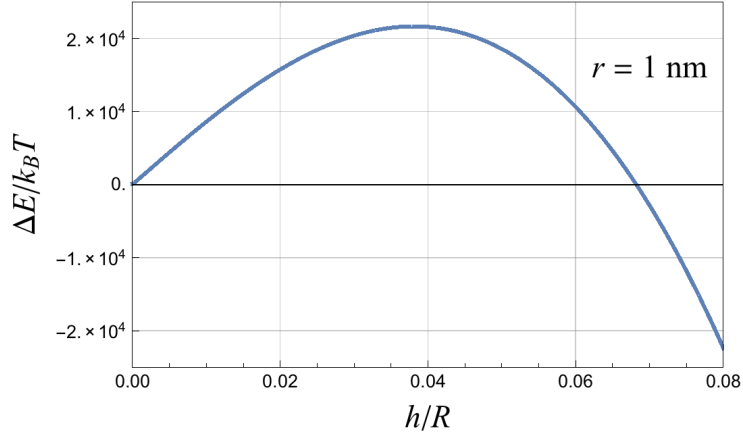}
  \caption{The dependence of $\Delta E(r, h)$ on $h$ for $r = 1$ nm in
    the case of a spherical water drop at $20^\circ$C with radius
    $R = 10\ \mu$m charged to $Q = \frac12 Q_R$.}
  \label{f:barr}
\end{figure}

From Fig. \ref{f:barr}, one can see that the maximum value of
$\Delta E(r_D, h)$ for $r_D = 1$ nm is of the order of $10^4k_BT$ and
is attained at $h = h_\text{max} \simeq 0.4 \ \mu$m. The corresponding
value of the charge in the protrusion is $q \simeq 130 e$, so the
continuum approximation is still reasonable. We interpret this value
as the energy barrier that needs to be overcome in order for a long
slender protrusion with radius of order $r_D$ to be able to grow to
decrease the energy of a spherical droplet. The value of $\Delta E$
found indicates that the barrier is prohibitively high, implying that
thermal fluctuations would not be able to excite this mode of
instability in a real system. In fact, even for methanol drops
($\sigma = 0.023$ N/m) or radius $R = 100$ nm we find that the
estimated barrier height $\Delta E$ is of the order of several hundred
$k_B T$, making it prohibitively high even in this case. Thus, we
believe that for finite screening length the nonlinear instability
discovered by us requires the level of thermal activation that may not
be readily available at room temperatures. Even if the estimate for
the barrier height above only represents the upper bound for the
actual barrier, we believe that the predicted order of magnitude for
$\Delta E$ should adequately represent the actual barrier heights.
Thus, our arguments suggest that in real systems the instability
leading to the ill-posedness of the model governed by the energy in
\eqref{eq:E} is suppressed by the low levels of thermal noise. This
might explain why the experimentally observed instability threshold
for suspended charged drops typically coincides with the threshold of
Rayleigh instability \cite{fernandezdelamora96}. We note that a
computation of the true energy barrier (within the continuum model
considered) would require a detailed numerical study of the saddle
points of the energy in \eqref{eq:F0} involving an optimization over
the shape of the protrusion and is beyond the scope of the present
study.

The conclusion above, however, needs to be taken with caution. What
our estimates suggest is that the onset of the nonlinear instability
of charged spherical drops is typically suppressed in a quiescent
medium. At the same time, in reality the drops are agitated by the
motion of the surrounding fluid and interactions with other drops, as
well as interaction with charged electrodes. Surface agitation due to
hydrodynamic effects may provide enhanced nonequilibrium fluctuations
of the interface that under certain conditions could trigger our
instability. Similarly, the presence of the external electric fields
could modify the energy barrier to make the instability driven by
thermal noise much more likely. Consider, for example, the case of a
drop as in Fig. \ref{f:barr} placed in an external field with
magnitude $|\mathbf E| = 3 \times 10^5$ V/m. Since this value is two
orders of magnitude lower than the field generated by the drop itself,
the drop shape and the charge distribution will be only slightly
perturbed away from those for a perfect sphere. At the same time, the
presence of the field lowers the value of the maximum of
$\Delta E(r_D, h)$ by $\Delta E_\text{ext} \simeq -q R |\mathbf E|$
(to the leading order for small enough $r_D$ and $h$). We find that
for this magnitude of the external field the barrier height
$\Delta E(r_D, h)$ decreases by a factor of 2. Further increase in the
magnitude of $|\mathbf E|$ leads to a further decrease of the barrier
(within the assumptions of our analysis), and the barrier height may
decrease yet further in the presence of surface contaminants that are
attracted to the region of charge concentration. Therefore,
interactions with other drops, as well as interactions with charged
electrodes, may be able to trigger our instability mechanism via
thermal fluctuations at room temperature and, in particular, result in
the onset of Coulombic fission below the Rayleigh limit
\cite{fernandezdelamora96,taflin89,gomez94,widmann97,richardson89}. This
also strongly suggests a mechanism for the experimental observations
in \cite{hager94a,hager94b}, in which a long slender tentacle abruptly
appeared as charged droplets passed a region with a strong electric
field.

Before concluding, let us comment on several other possible mechanisms
that could regularize the variational problem for the energy in
\eqref{eq:E}. One natural candidate for such a mechanism is
discreteness of charges. It provides a short-scale cutoff equal to the
average distance between the individual ions. The importance of point
charges for the process of Coulombic fission during electrospray was
pointed out in \cite{fenn93} and further quantified in
\cite{labowsky00a}. We note that in the case of a very different
electrified liquid, namely, liquid helium, charges are known to be
able to form a Wigner crystal on the liquid-gas interface
\cite{crandall71,grimes79}. Yet another possible mechanism may involve
stabilization of the interfaces due to the effective bending rigidity
supplied by the charged boundary layer, as discussed in
\cite{ambjornsson07}.

Finally, we note that the ill-posedness of the variational problem
associated with \eqref{eq:E} raises some questions about the validity
of various numerical studies of the onset of Rayleigh instability and
the formation of Taylor cones that treat the electrified liquid as a
conductor (see, e.g.,
\cite{basaran89a,betelu06,collins08,burton11,garzon14}). The numerical
discretization naturally provides a short-scale cutoff, which is
unrelated to the one in the actual physical system. Because of the
underlying ill-posedness, however, one could expect failure of
convergence when resolving interfacial singularities as the numerical
grid is refined. Therefore, in view of our results the predictions of
such numerical studies may need to be taken with caution.
 
\section*{Funding statement}

The work of C. B. M. was partially supported by NSF via grants
DMS-0908279 and DMS-1313687. M.N. was partially supported by the
Italian CNR-GNAMPA and by the University of Pisa via grant
PRA-2015-0017.

\section*{Ethics statement}

This research does not contain human or animal subjects.

\section*{Data accessibility}

This paper does not have any supporting data. 

\section*{Competing interests}

We have no competing interests.

\section*{Authors' contributions}

Both authors contributed equally in formulating, carrying out and
writing up the resutls of this research. The final version has been
approved by both authors for publication.

\bibliographystyle{unsrt}

\bibliography{../stat,../nonlin,../mura,../bio}

\begin{thebibliography}{10}

\bibitem{gaskell97}
S.~J. Gaskell.
\newblock Electrospray: Principles and practice.
\newblock {\em J. Mass Spectrom.}, 32:677--688, 1997.

\bibitem{fenn-nobel}
{The Nobel Prize in Chemistry 2002: Information for the Public}.
\newblock {The Nobel Foundation. October 9, 2002}.

\bibitem{zeleny17}
J.~Zeleny.
\newblock Instability of electrified liquid surfaces.
\newblock {\em Phys. Rev.}, 10:1--6, 1917.

\bibitem{taylor64}
G.~I. Taylor.
\newblock Disintegration of water drops in an elecric field.
\newblock {\em Proc. Roy. Soc. A}, 280:383--397, 1964.

\bibitem{kebarle00}
P.~Kebarle and M.~Peschke.
\newblock On the mechanisms by which the charged droplets produced by
  electrospray lead to gas phase ions.
\newblock {\em Analytica Chimica Acta}, 406:11--35, 2000.

\bibitem{fernandezdelamora07}
J.~Fern\'andez de~la Mora.
\newblock The fluid dynamics of {Taylor} cones.
\newblock {\em Ann. Rev. Fluid Mech.}, 39:217--243, 2007.

\bibitem{rayleigh1882}
{Lord Rayleigh}.
\newblock On the equilibrium of liquid conducting masses charged with
  electricity.
\newblock {\em Phil. Mag.}, 14:184--186, 1882.

\bibitem{fernandezdelamora96}
J.~Fern\'andez de~la Mora.
\newblock On the outcome of the {Coulombic} fission of a charged isolated drop.
\newblock {\em J. Colloid Interface Sci.}, 178:209--218, 1996.

\bibitem{gomez94}
A.~Gomez and K.~Tang.
\newblock Charge and fission of droplets in electrostatic sprays.
\newblock {\em Phys. Fluids}, 6:404--414, 1994.

\bibitem{duft03}
D.~Duft, T.~Achtzehn, R.~M\"uller, B.~A. Huber, and T.~Leisner.
\newblock Coulomb fission: {Rayleigh} jets from levitated microdroplets.
\newblock {\em Nature}, 421:128--128, 2003.

\bibitem{achtzehn05}
T.~Achtzehn, R.~M\"uller, D.~Duft, and T.~Leisner.
\newblock The {Coulomb} instability of charged microdroplets: dynamics and
  scaling.
\newblock {\em Eur. Phys. J. D}, 34:311--313, 2005.

\bibitem{basaran89a}
O.~A. Basaran and L.~E. Scriven.
\newblock Axisymmetric shapes and stability of isolated charged drops.
\newblock {\em Phys. Fluids A}, 1:795--798, 1989.

\bibitem{fontelos04}
M.~A. Fontelos and A.~Friedman.
\newblock Symmetry-breaking bifurcations of charged drops.
\newblock {\em Arch. Rat. Mech. Anal.}, 172:267--294, 2004.

\bibitem{ambrosio}
L.~Ambrosio, N.~Fusco, and D.~Pallara.
\newblock {\em Functions of bounded variation and free discontinuity problems}.
\newblock Oxford Mathematical Monographs. The Clarendon Press, New York, 2000.

\bibitem{landkof}
N.~S. Landkof.
\newblock {\em Foundations of modern potential theory}.
\newblock Springer-Verlag, New York, 1972.

\bibitem{lieb-loss}
E.~H. Lieb and M.~Loss.
\newblock {\em Analysis}.
\newblock American Mathematical Society, Providence, RI, 2010.

\bibitem{duft02}
D.~Duft, H.~Lebius, B.~A. Huber, C.~Guet, and T.~Leisner.
\newblock Shape oscillations and stability of charged microdroplets.
\newblock {\em Phys. Rev. Lett.}, 89:084503, 2002.

\bibitem{doyle64}
A.~Doyle, D.~R. Moffett, and B.~Vonnegut.
\newblock Behavior of evaporating electrically charged droplets.
\newblock {\em J. Colloid Sci.}, 19:136--143, 1964.

\bibitem{abbas67}
M.~A. Abbas and J.~Latham.
\newblock The instability of evaporating charged drops.
\newblock {\em J. Fluid Mech.}, 30:663--670, 1967.

\bibitem{schweizer71}
J.~W. Schweizer and D.N. Hanson.
\newblock Stability limit of charged drops.
\newblock {\em J. Colloid Interface Sci.}, 35:417--423, 1971.

\bibitem{richardson89}
C.~B. Richardson, A.~L. Pigg, and R.~L. Hightower.
\newblock On the stability limit of charged droplets.
\newblock {\em Proc. R. Soc. Lond. Ser. A}, 422:319--328, 1989.

\bibitem{taflin89}
D.~C. Taflin, T.~L. Ward, and E.~J. Davis.
\newblock Electrified droplet fission and the {Rayleigh} limit.
\newblock {\em Langmuir}, 5:376--384, 1989.

\bibitem{widmann97}
J.~F. Widmann, C.~L. Aardahl, and E.~J. Davis.
\newblock Observations of non-{Rayleigh} limit explosions of electrodynamically
  levitated microdroplets.
\newblock {\em Aerosol Sci. Technol.}, 27:636--648, 1997.

\bibitem{goldman15}
M.~Goldman, M.~Novaga, and B.~Ruffini.
\newblock Existence and stability for a non-local isoperimetric model of
  charged liquid drops.
\newblock {\em Arch. Rat. Mech. Anal.}, 217:1--36, 2015.

\bibitem{labowsky00}
M.~Labowsky, J.~B. Fenn, and J.~Fernandez de~la Mora.
\newblock A continuum model for ion evaporation from a drop: effect of
  curvature and charge on ion solvation energy.
\newblock {\em Analytica Chimica Acta}, 406:105--118, 2000.

\bibitem{butler80}
C.~M. Butler.
\newblock Capacitance of a finite-length conducting cylindrical tube.
\newblock {\em J. Appl. Phys.}, 51:5607--5609, 1980.

\bibitem{goldman15b}
M.~Goldman, M.~Novaga, and B.~Ruffini.
\newblock On minimizers of an isoperimetric problem with long-range
  interactions and convexity constraint.
\newblock Preprint, 2015.

\bibitem{gamow30}
G.~Gamow.
\newblock Mass defect curve and nuclear constitution.
\newblock {\em Proc. Roy. Soc. London A}, 126:632--644, 1930.

\bibitem{weizsacker35}
C.~F. von Weizs{\"a}cker.
\newblock Zur {Theorie} der {Kernmassen}.
\newblock {\em Zeitschrift f{\"u}r Physik A}, 96:431--458, 1935.

\bibitem{bohr39}
N.~Bohr and J.~A. Wheeler.
\newblock The mechanism of nuclear fission.
\newblock {\em Phys. Rev.}, 56:426--450, 1939.

\bibitem{km:cpam14}
H.~Kn\"upfer and C.~B. Muratov.
\newblock On an isoperimetric problem with a competing non-local term. {II.
  The} general case.
\newblock {\em Commun. Pure Appl. Math.}, 67:1974--1994, 2014.

\bibitem{lu14}
J.~Lu and F.~Otto.
\newblock Nonexistence of minimizer for {Thomas-Fermi-Dirac-von Weizs\"acker}
  model.
\newblock {\em Comm. Pure Appl. Math.}, 67:1605--1617, 2014.

\bibitem{choksi11}
R.~Choksi and M.~A. Peletier.
\newblock Small volume fraction limit of the diblock copolymer problem: {II.
  Diffuse} interface functional.
\newblock {\em SIAM J. Math. Anal.}, 43:739--763, 2011.

\bibitem{acerbi13}
E.~Acerbi, N.~Fusco, and M.~Morini.
\newblock Minimality via second variation for a nonlocal isoperimetric problem.
\newblock {\em Commun. Math. Phys.}, 322:515--557, 2013.

\bibitem{julin13}
V.~Julin and G.~Pisante.
\newblock Minimality via second variation for microphase separation of diblock
  copolymer melts.
\newblock Preprint: arXiv:1301.7213, 2013.

\bibitem{kmn:cmp}
H.~Kn\"upfer, C.~B. Muratov, and M.~Novaga.
\newblock Low density phases in a uniformly charged liquid.
\newblock Preprint: arXiv:1504.05600, 2015.

\bibitem{rigot00}
S.~Rigot.
\newblock Ensembles quasi-minimaux avec contrainte de volume et
  rectifiabilit\'e uniforme.
\newblock {\em M\'emoires de la SMF, 2e s\'erie}, 82:1--104, 2000.

\bibitem{landau5}
L.~D. Landau and E.~M. Lifshits.
\newblock {\em Course of Theoretical Physics}, volume~5.
\newblock Pergamon Press, London, 1980.

\bibitem{landau8}
L.~D. Landau and E.~M. Lifshits.
\newblock {\em Course of Theoretical Physics}, volume~8.
\newblock Pergamon Press, London, 1984.

\bibitem{lin93}
F.~H. Lin.
\newblock Variational problems with free interfaces.
\newblock {\em Calc. Var. PDE}, 1:149--168, 1993.

\bibitem{ambrosio93}
L.~Ambrosio and G.~Buttazzo.
\newblock An optimal design problem with perimeter penalization.
\newblock {\em Calc. Var. PDE}, 1:55--69, 1993.

\bibitem{ioffe77}
A.~D. Ioffe.
\newblock On lower semicontinuity of integral functionals. {I}.
\newblock {\em SIAM J. Control Optim.}, 15:521--538, 1977.

\bibitem{cicalese13}
M.~Cicalese and E.~Spadaro.
\newblock Droplet minimizers of an isoperimetric problem with long-range
  interactions.
\newblock {\em Comm. Pure Appl. Math.}, 66:1298--1333, 2013.

\bibitem{ambjornsson07}
T.~Ambj\"ornsson, M.~A. Lomholt, and P.~L. Hansen.
\newblock Applying a potential across a biomembrane: Electrostatic contribution
  to the bending rigidity and membrane instability.
\newblock {\em Phys. Rev. E}, 75:051916, 2007.

\bibitem{hager94a}
D.~B. Hager, N.~J. Dovichi, J.~Klassen, and P.~Kebarle.
\newblock Droplet electrospray mass spectrometry.
\newblock {\em Anal. Chem.}, 66:3944--3949, 1994.

\bibitem{hager94b}
D.~B. Hager and N.~J. Dovichi.
\newblock Behavior of microscopic liquid droplets near a strong electrostatic
  field: Droplet electrospray.
\newblock {\em Anal. Chem.}, 66:1593--1594, 1994.

\bibitem{fenn93}
J.~B. Fenn.
\newblock Ion formation from charged droplets: roles of geometry, energy, and
  time.
\newblock {\em J. Amer. Soc. Mass Spectrom.}, 4:524--535, 1993.

\bibitem{labowsky00a}
M.~Labowsky.
\newblock Discrete charge distributions in dielectric droplets.
\newblock {\em J. Colloid Interface Sci.}, 206:19--28, 1998.

\bibitem{crandall71}
R.S. Crandall and R.~Williams.
\newblock Crystallization of electrons on the surface of liquid helium.
\newblock {\em Physics Letters A}, 34:404--405, 1971.

\bibitem{grimes79}
C.~C. Grimes and G.~Adams.
\newblock Evidence for a liquid-to-crystal phase transition in a classical,
  two-dimensional sheet of electrons.
\newblock {\em Phys. Rev. Lett.}, 42:795--798, 1979.

\bibitem{betelu06}
S.~I. Betel{\'u}, M.~A. Fontelos, U.~Kindel{\'a}n, and O.~Vantzos.
\newblock Singularities on charged viscous droplets.
\newblock {\em Physics of Fluids}, 18:051706, 2006.

\bibitem{collins08}
R.~T. Collins, J.~J. Jones, M.~T. Harris, and O.~A. Basaran.
\newblock Electrohydrodynamic tip streaming and emission of charged drops from
  liquid cones.
\newblock {\em Nature Phys.}, 4:149--154, 2008.

\bibitem{burton11}
J.~C. Burton and P.~Taborek.
\newblock Simulations of {Coulombic} fission of charged inviscid drops.
\newblock {\em Phys. Rev. Lett.}, 106:144501, 2011.

\bibitem{garzon14}
M.~Garzon, L.~J. Gray, and J.~A. Sethian.
\newblock Numerical simulations of electrostatically driven jets from
  nonviscous droplets.
\newblock {\em Phys. Rev. E}, 89:033011, 2014.

\end{thebibliography}

% \end{article}

\end{document}